\def\tsc#1{\csdef{#1}{\textsc{\lowercase{#1}}\xspace}}
\newtheorem{myprop}{Proposition}
\newtheorem*{myprop*}{Proposition}
\newtheorem{mytheorem}{Theorem}
\newtheorem{mydef}[myprop]{Definition}
\definecolor{dr}{rgb}{0.75,0.00,0.00}
\definecolor{lr}{rgb}{1.00,0.75,0.75}
\newcolumntype{d}[1]{D{.}{.}{#1}}
\newcommand{\dsh}{\multicolumn{1}{c}{\textemdash}}
\begin{document}
\let\WriteBookmarks\relax
\def\floatpagepagefraction{1}
\def\textpagefraction{.001}

\shorttitle{Resource-robust valid inequalities for vehicle routing and related problems}
\shortauthors{Y.N. Hoogendoorn \& K. Dalmeijer}

\title [mode = title]{Resource-robust valid inequalities for vehicle routing and related problems}     

\author[1]{Ymro N. Hoogendoorn}[type=editor,
                        auid=000,bioid=1,
                        orcid=0000-0002-9816-8842]
\cormark[1]
\ead{hoogendoorn@rsm.nl}

\affiliation[1]{organization={Erasmus University Rotterdam},
                addressline={Burgermeester Oudlaan 50}, 
                city={Rotterdam},
                postcode={3062PA}, 
                state={Zuid-Holland},
                country={The Netherlands}}

\author[2]{Kevin Dalmeijer}[type=editor,
                        auid=000,bioid=1,
                        orcid=0000-0002-4304-7517]
                        
\affiliation[2]{organization={H. Milton Stewart School of Industrial and Systems Engineering, Georgia Institute of Technology},
                city={Atlanta},
                state={GA},
                country={United States}}

\cortext[cor1]{Corresponding author}

\begin{abstract}
\noindent Branch-price-and-cut algorithms play an important role in solving many vehicle routing problems (VRPs).
Adding valid inequalities in this framework can impact the pricing subproblem, for which the literature distinguishes between `robust’ and `non-robust’ cuts.
We define the `robust application’ of a cut in a specific context, making this distinction more precise.
Next, we define broader `resource-robust applications' that can be handled efficiently in the subproblem.
We then introduce new resource-robust valid inequalities and show computational benefits for the capacitated VRP.
\end{abstract}

\begin{keywords} 
Resource Robust \sep Valid Inequalities \sep Branch-Price-and-Cut \sep Vehicle Routing
\end{keywords}

\maketitle

\section{Introduction}
\label{sec:introduction}

Branch-price-and-cut algorithms play an important role in solving a wide variety of vehicle routing problems, crew scheduling problems, and related problems such as bin packing, vector packing, and generalized assignment \citep{CostaEtAl2019-ExactBranchPrice,DesaulniersEtAl1998-UnifiedFrameworkDeterministic,PessoaEtAl2020-GenericExactSolver}.
In these applications, the column-generation subproblem, or \emph{pricing problem}, is commonly modeled as a Shortest Path Problem with Resource Constraints (\mbox{SPPRC}) and solved with a \emph{labeling algorithm} \citep{IrnichDesaulniers2005-ShortestPathProblems}.
Valid inequalities, or \emph{cuts} are added to strengthen the linear programming relaxations.

When adding cuts, it is important to consider how the duals can be incorporated into the pricing problem, as this may have a major impact on the performance of the labeling algorithm.
For this purpose, the literature distinguishes \emph{robust valid inequalities} and \emph{non-robust valid inequalities} \citep{DeAragaoEtAl2003-IntegerProgramReformulation,CostaEtAl2019-ExactBranchPrice,FukasawaEtAl2006-RobustBranchCut,PessoaEtAl2008-RobustBranchCut}.
Although definitions vary between papers, the common intuition is that robust cuts can be easily incorporated and have only a limited impact on the pricing problem.
Handling non-robust cuts in the labeling algorithm typically requires additional resources or specialized dominance rules, which can severely impact computational performance.
Despite this drawback, non-robust valid inequalities can still speed up the overall algorithm if used with care \citep{JepsenEtAl2008-SubsetRowInequalities, PessoaEtAl2020-GenericExactSolver}, and mitigating the negative impact on the labeling algorithm is an active field of study \citep{FaldumEtAl2023-SubsetRowInequalities,PecinEtAl2017-NewEnhancementsExact,PecinEtAl2017-ImprovedBranchCut,PessoaEtAl2020-GenericExactSolver}.

While the common terminology of robust versus non-robust cuts is often sufficient, it fails to accurately describe cases where context is important.
For example, \citep{PessoaEtAl2008-RobustBranchCut} introduce extended capacity cuts and triangle clique cuts that can be considered robust, but only for vehicle routing problems that use an extended graph in which vertices are duplicated for every possible vehicle load.
The route load knapsack cuts introduced by \citep{LiguoriEtAl2023-NonrobustStrongKnapsack} provide an even more unique example.
While the new cuts are presented as non-robust, the authors also show that if the labeling algorithm already includes load resources, then the duals can be incorporated efficiently, almost as if the cuts were robust.

In this paper, we first introduce more precise language to describe robust valid inequalities.
Rather than stating that a cut is robust, this paper will define what it means for a cut to admit a \emph{robust application} in the context of a specific model.
This definition makes it clear that robustness is dependent on the problem formulation and captures the intuition behind robustness in the literature.

Second, the paper provides the new concept of a \emph{resource-robust application} of a cut in the context of a specific model and a specific set of resources that are present in the labeling algorithm.
The new definition encompasses the route load knapsack cuts and generalizes the observations by \citep{LiguoriEtAl2023-NonrobustStrongKnapsack} to show that cuts with a resource-robust application can be efficiently incorporated into the labeling algorithm.
Within the necessary context, we also refer to cuts with a resource-robust application as \emph{resource-robust valid inequalities}.

Third, the paper demonstrates the merit of resource-robust applications by defining several new classes of valid inequalities and demonstrating their effectiveness.
These new valid inequalities can be applied resource-robustly to the $ng$-route relaxation resources \citep{BaldacciEtAl2004-ExactAlgorithmCapacitated}.
The $ng$-route relaxation is commonly used in state-of-the-art algorithms and the corresponding resources are therefore readily available \citep{CostaEtAl2019-ExactBranchPrice}, making the new valid inequalities widely applicable.
As a proof of concept, we perform computational experiments on the capacitated vehicle routing problem and demonstrate a substantial benefit of using resource-robust capacity cuts over robust or non-robust capacity cuts.

This paper is structured as follows.
Section~\ref{sec:setting} defines the general setting considered in this paper.
Robust and resource-robust applications are formally introduced in Section~\ref{sec:resource_robust}, followed by several new classes of valid inequalities in Section~\ref{sec:applications}.
Section~\ref{sec:compexp} presents computational experiments for the capacitated vehicle routing problem.
Finally, Section~\ref{sec:conclusion} states our conclusions and some directions for future research.

\section{Setting}
\label{sec:setting}
This section introduces the setting that is considered in this paper.
This is a common setting that is quite generic and that is used extensively in state-of-the-art algorithms \citep{CostaEtAl2019-ExactBranchPrice,DesaulniersEtAl1998-UnifiedFrameworkDeterministic,PessoaEtAl2020-GenericExactSolver}.

\paragraph{Model}
Let $G=(V,A)$ be a directed graph with vertices $V = \{0,1,\hdots,n+1\}$ and arc set $A$.
Vertices $0$ and $n+1$ correspond to the starting depot and the ending depot, respectively, and the set $V' = \{1,2,\hdots,n\}$ corresponds to customers that have to be served.
Arcs $A$ are defined from depot $0$ to customers $V'$, between customers $V'$, and from customers $V'$ to depot $n+1$.
The cost of traversing an arc $(i,j) \in A$ is given by $c_{ij} \in \mathbb{R}$.
Let $P$ be a set of routes $p \in P$ in $G$ from $0$ to $n+1$, possibly adhering to many restrictions. The parameter $a_p^i$ is the number of times vertex $i\in V'$ is served by route $p \in P$. 
With slight abuse of notation we use $i \in p$ to indicate that vertex $i\in V$ is on path $p \in P$, and $(i,j)\in p$ to indicate that arc $(i,j)\in A$ is on path $p \in P$.
Summations over $p$ may repeat the same vertices and arcs multiple times if $p$ has cycles. For example, a summation of values $y_i$, $i\in V'$ along route $p$ will conveniently be expressed as $\sum_{i\in p}y_i$ instead of $\sum_{i\in V'}a^i_py_i$.

The goal is to select a set of routes that serve all customers at minimum cost.
For this purpose we define variables $x_p \in \{0,1\}$ to be one if and only if route $p \in P$ is selected.
The model can now be expressed as follows, including a generic valid inequality.

\begin{figure}[th]
\begin{mini!}
%
	{}%
%
	{\sum_{p\in P} \sum_{(i,j) \in p} c_{ij} x_p, \label{eq:spp:obj}}
%
	{\label{eq:spp}}%
%
	{}
%
%
	\addConstraint%
	{\sum_{p\in P} a_p^i x_p = 1,}%
	{\quad\forall i\in V', \quad}%
	{(\mu^i \in \mathbb{R}) \label{eq:spp:constr}}
%
%
	\addConstraint%
	{\sum_{p\in P} g_p x_p \ge h,}%
	{~}%
	{(\gamma \ge 0) \label{eq:spp:vige}}
	\addConstraint%
	{x_p \in \mathbb{B},}%
	{\quad \forall p\in P.}%
	{\label{eq:spp:binary}}
\end{mini!}
\end{figure}

The Objective~\eqref{eq:spp:obj} minimizes the cost of the selected routes, Constraints~\eqref{eq:spp:constr} ensure that all customers are served, and Constraint~\eqref{eq:spp:vige} is a generic inequality for demonstration purposes.
Constraints~\eqref{eq:spp:binary} enforce that route selections are binary.

\paragraph{Pricing Problem}
In the branch-price-and-cut context, the integrality constraints~\eqref{eq:spp:binary} are relaxed to $x_p \ge 0$, and the resulting linear program is solved for a subset of the paths in $P$.
This results in duals for Constraints~\eqref{eq:spp:constr}, \eqref{eq:spp:vige} indicated by $\mu^i \in \mathbb{R}$ $\forall i \in V'$ and $\gamma \ge 0$, respectively.
For convenience, let $\mu^0 = \mu^{n+1} = 0$.
The subproblem, or \emph{pricing problem}, then asks for a path $p \in P$ with negative reduced cost.
By linear programming duality, such a path can be found by solving
\begin{equation*}
	\min_{p \in P} \sum_{(i,j)\in p} c_{ij} - \sum_{i \in p} \mu^i - g_p \gamma,
\end{equation*}
and comparing the optimal value to zero.

\paragraph{SPPRC}
The pricing problem is modeled as a Shortest Path Problem with Resource Constraints (SPPRC, \citep{IrnichDesaulniers2005-ShortestPathProblems}).
The SPPRC amounts to finding a shortest path from a source to a sink node, while satisfying resource constraints. The \emph{resources} are a vector of quantities $R\in\mathbb{R}^m$ that are updated along the route according to Resource Extension Functions (REFs) $F_{ij}:\mathbb{R}^m\rightarrow\mathbb{R}^m$ associated with each arc $(i,j)\in A$. Without loss of generality, the REFs represent \emph{minimal consumptions} of these quantities \citep{IrnichDesaulniers2005-ShortestPathProblems}.
A route is feasible if the resources remain within pre-specified bounds $l_i,u_i\in\mathbb{R}^m$ at each visited vertex $i\in V$.

We also assume that all REFs $F_{ij}$ are \emph{non-decreasing}. An REF $F$ is non-decreasing if and only if for any two resource vectors $R \le R'$ that are extended according to $F$, then $F(R) \leq F(R')$, where both comparisons are componentwise \citep{Irnich2008-ResourceExtensionFunctions}.
This is a common setting and the most interesting case for resource-robust applications, as we will show that the dominance rules in the labeling algorithm can be preserved in this case (Theorem~\ref{theorem:rr}).

\paragraph{Labeling Algorithm}
The SPPRC is solved with a \emph{labeling algorithm} that maintains a set of \emph{labels} that correspond to partial routes that start at vertex $0$.
Labels are \emph{extended} over all arcs to generate new labels, as defined by the REFs.
Infeasible labels are immediately removed, and \emph{dominance rules} are used to eliminate labels that are not Pareto-optimal.
When no more labels can be generated, we have found a route with minimum reduced cost.

A label is defined by the tuple $L = (\bar{c}, i, R)$.
For label $L$, we refer to these components as $\bar{c}(L)$, $i(L)$, and $R(L)$, respectively.
Here $\bar{c}(L)$ is the accumulated reduced cost of the partial route, $i(L)$ is the current endpoint, and $R(L)$ is the current resource vector.
We denote the extension of a label $L$ over $(i(L),j)\in A$ by $L\oplus j$.
Ignoring the valid inequality \eqref{eq:spp:vige} for the moment, $L\oplus j$ is defined as follows:
\begin{alignat}{2}
	\bar{c}(L\oplus j) &= \bar{c}(L) + c_{i(L),j} - \mu^{j}, \label{eq:cbar_ext}\\
	i(L\oplus j) &= j,\notag\\
	R(L\oplus j) &= \max\left\{l_j,F_{i(L),j}(R(L))\right\},\notag
\end{alignat}
where $F_{i(L),j}$ is the REF for arc $(i(L),j)$ and the maximum is taken element-wise over the resource vector.
If $R(L\oplus j)$ is not within the resource bounds $l_j$ and $u_j$, then the extension is infeasible, and $L\oplus j$ is discarded.

Dominance plays an important role in labeling algorithms, as it allows for removing labels that will not be part of the shortest path.
A label $L$ dominates label $L'$ if all feasible completions (extending the label until the sink node) of $L'$ are feasible for $L$ and $L$ has a lower or equal reduced cost than $L'$ for all of them \citep{PecinEtAl2017-ImprovedBranchCut}.
The non-decreasing REFs allow for the following dominance rule \citep{IrnichDesaulniers2005-ShortestPathProblems}: label $L$ dominates label $L'$ if all of the following are true:
\begin{alignat}{2}
	\bar{c}(L) &\le \bar{c}(L'), \label{eq:dom1}\\
	i(L) &= i(L'),\\
	R(L) &\le R(L'), \label{eq:dom3}
\end{alignat}
and at least one of the inequalities is strictly satisfied.

\section{Robust and Resource-Robust Applications}\label{sec:resource_robust}
In the branch-price-and-cut framework, care must be taken when including a valid inequality into the model.
The reduced cost update rule \eqref{eq:cbar_ext} must be updated to reflect the modified reduced cost that now includes the dual of the added cut.
Second, it needs to be verified that the dominance rules \eqref{eq:dom1}-\eqref{eq:dom3} still imply dominance, or adjustments may be necessary.
In this section, we define the notions of robust and resource-robust applications, which are two ways to easily and efficiently incorporate valid inequalities into the pricing problem.

\paragraph{Robust Applications}

If possible, a straightforward way to include a valid inequality is by projecting its dual onto the arcs of the underlying graph.
Definition \ref{def:r} states that a valid inequality admits a \emph{robust application} when such a projection is possible for a specific model.

\begin{mydef}[Robust Application]\label{def:r}
A valid inequality of the form $\sum_{p\in P}g_px_p\geq h$ admits a robust application for Formulation \eqref{eq:spp} if there exists constants $g_{ij}$, $(i,j)\in A$ such that $g_p = \sum_{(i,j)\in p}g_{ij}$ for all $p\in P$.
\end{mydef}

If Inequality~\eqref{eq:spp:vige} has a robust application, it can be incorporated by only updating reduced cost extension function \eqref{eq:cbar_ext} to
\begin{equation}
\label{eq:cbar_ext_R}
\bar{c}(L\oplus j)=\bar{c}(L)+c_{i(L),j}-\mu^j-g_{i(L),j}\gamma.
\end{equation}
It is straightforward to see that adding a constant to each arc does not affect dominance rules \eqref{eq:dom1}-\eqref{eq:dom3}, which continue to apply.
Implementing \eqref{eq:cbar_ext_R} is possible by pre-processing the arc costs used by the labeling algorithm and subtracting the new terms $g_{i(L),j}\gamma$.
After pre-processing, the time needed to extend or compare labels is independent of the number of cuts added.

When the specific model is clear from context, we call a valid inequality with a robust application a \emph{robust valid inequality} in line with the literature.
Similarly, if a valid inequality is not known to have a robust application in the given context, we may call the valid inequality \emph{non-robust}.
In cases where the context is not obvious, e.g., when comparing between papers, the new terminology is more precise, as it explicitly acknowledges that robustness depends on the specific model used.
It also formalizes what it means for valid inequalities to not ``change the structure of the pricing subproblem'' \cite{DeAragaoEtAl2003-IntegerProgramReformulation,FukasawaEtAl2006-RobustBranchCut,PessoaEtAl2008-RobustBranchCut}, i.e., only the cost parameters of the labeling algorithm need to be updated.

Definition~\ref{def:r} also helps to classify the extended capacity and triangle clique cuts \cite{PessoaEtAl2008-RobustBranchCut} mentioned in the introduction.
The authors show that the duals of these valid inequalities can be projected onto the arcs of the load-indexed layered graph of the original problem.
Hence, we say that these valid inequalities admit a robust application for the  \emph{layered graph formulation}.
For more information about layered graph approaches we refer to the survey by \cite{GouveiaEtAl2019-LayeredGraphApproaches}.

\paragraph{Resource-Robust Applications}
When inequalities do not admit a robust application, they are typically implemented through additional resources or specialized dominance rules, which may severely impact the performance of the pricing problem.
However, as observed by \citep{LiguoriEtAl2023-NonrobustStrongKnapsack} for the route load knapsack cuts, the impact on the pricing problem may be reduced if the resources that are necessary to calculate the cut contributions \emph{are already available in the labeling algorithm.}
To generalize this observation, we introduce the new concept of a \emph{resource-robust application}.
Cuts that admit a resource-robust application can efficiently be incorporated into the pricing problem in the context of a specific model \emph{and a specific set of resources}.

\begin{mydef}[Resource-robust Application]\label{def:rr}
A valid inequality of the form $\sum_{p\in P}g_px_p\geq h$ admits a resource-robust application for Formulation \eqref{eq:spp} if there exist non-increasing functions $g_{ij}:\mathbb{R}^m\rightarrow\mathbb{R}$, $(i,j)\in A$ such that
\[
g_p = \sum_{k=1}^{K}g_{i_ki_{k+1}}(R_k)
\]
for all $p=(i_{1},\ldots,i_{K+1}) \in P$ with resource vectors $(R_{1}, \ldots,$ $R_{K+1})$ along the route as defined by the SPPRC.
\end{mydef}

The merit of this definition is supported by the following result:

\begin{mytheorem}
	\label{theorem:rr}
    If a valid inequality has a resource-robust application, it can be incorporated into the pricing problem by changing the reduced cost extension function~\eqref{eq:cbar_ext} of extending label $L$ over arc $(i(L),j)$ to
	\begin{equation}
		\bar{c}(L\oplus j) = \bar{c}(L) + c_{i(L),j} - \mu^{j}  - g_{i(L),j}(R(L)) \gamma, \label{eq:cbar_ext_RR}
	\end{equation}
    with $\gamma$ the dual of the inequality.
	If \eqref{eq:dom1}-\eqref{eq:dom3} holds for two labels $L$ and $L'$, then $L$ dominates $L'$.
\end{mytheorem}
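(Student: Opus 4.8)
The goal is to show two things: that the modified reduced-cost extension function \eqref{eq:cbar_ext_RR} correctly accounts for the dual $\gamma$ of the resource-robust valid inequality, and that the classical dominance rule \eqref{eq:dom1}--\eqref{eq:dom3} remains valid after this modification. The plan is to handle these in turn.

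First I would verify correctness of the reduced-cost recursion. Fix a route $p = (i_1,\ldots,i_{K+1}) \in P$ with resource vectors $(R_1,\ldots,R_{K+1})$ as generated by the SPPRC. By unrolling \eqref{eq:cbar_ext_RR} along $p$ and using $\mu^0 = \mu^{n+1} = 0$, the label at the sink has accumulated value $\sum_{(i,j)\in p} c_{ij} - \sum_{i\in p}\mu^i - \gamma\sum_{k=1}^K g_{i_k i_{k+1}}(R_k)$. By the defining property of a resource-robust application (Definition~\ref{def:rr}), the last sum equals $g_p$, so this is exactly the reduced cost $\sum_{(i,j)\in p}c_{ij} - \sum_{i\in p}\mu^i - g_p\gamma$ that the pricing problem must minimize. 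Hence the labeling algorithm with \eqref{eq:cbar_ext_RR} computes the correct objective.

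Next, the dominance argument, which is the crux. Suppose $L$ and $L'$ satisfy $\bar c(L) \le \bar c(L')$, $i(L) = i(L')$, and $R(L) \le R(L')$, with at least one strict. I must show that every feasible completion of $L'$ is a feasible completion of $L$ and yields reduced cost for the $L$-extension no larger than for the $L'$-extension. Feasibility transfer is the standard SPPRC argument: since the REFs $F_{ij}$ are non-decreasing and consumptions are minimal, starting from the smaller vector $R(L) \le R(L')$ keeps all intermediate resource vectors pointwise below those obtained from $R(L')$, so no resource bound $u_i$ is ever violated along the shared completion — this is unchanged by the cut. For the cost comparison, let $q = (j_1,\ldots,j_t)$ be any completion (with $j_1 = i(L) = i(L')$). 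Extending $L$ along $q$ produces resource vectors $R(L) = S_1 \le S_2 \le \cdots$ and extending $L'$ produces $R(L') = S_1' \le S_2' \le \cdots$; by monotonicity of the REFs, $S_r \le S_r'$ for every $r$. The accumulated reduced cost of the $L$-completion is $\bar c(L) + \sum_{(j,j')\in q} (c_{jj'} - \mu^{j'}) - \gamma \sum_{r} g_{j_r j_{r+1}}(S_r)$, and similarly for $L'$ with $\bar c(L')$ and $S_r'$. Subtracting, the $c$ and $\mu$ terms cancel, leaving $\bar c(L) - \bar c(L') - \gamma\sum_r\bigl(g_{j_r j_{r+1}}(S_r) - g_{j_r j_{r+1}}(S_r')\bigr)$. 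The first difference is $\le 0$ by hypothesis. For the sum: since $S_r \le S_r'$ and each $g_{ij}$ is non-increasing, $g_{j_r j_{r+1}}(S_r) \ge g_{j_r j_{r+1}}(S_r')$, so each bracketed term is $\ge 0$; combined with $\gamma \ge 0$ (the dual of a $\ge$ inequality), the whole subtracted quantity is $\le 0$. Hence the $L$-completion has reduced cost no greater than the $L'$-completion, which is precisely dominance.

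The main obstacle is making the resource-monotonicity step fully rigorous — specifically, justifying that $S_r \le S_r'$ holds at \emph{every} vertex of the completion, not just the first. This requires an induction on $r$ that simultaneously uses (i) the non-decreasing property of the REFs to propagate $S_r \le S_r' \Rightarrow F(S_r) \le F(S_r')$, and (ii) the fact that the lower-bound truncation $R \mapsto \max\{l_j, R\}$ in the extension rule preserves the order (the componentwise max of two ordered vectors with a common floor stays ordered). Once this induction is in place, the three sign facts — $\bar c(L) \le \bar c(L')$, $g_{ij}$ non-increasing, $\gamma \ge 0$ — combine immediately, and the theorem follows. Everything else is bookkeeping that mirrors the classical proof of the dominance rule in \citep{IrnichDesaulniers2005-ShortestPathProblems}.
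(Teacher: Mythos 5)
Your proposal is correct and follows the same route the paper's supplementary proof takes: telescope the modified extension function along a route and invoke Definition~\ref{def:rr} to recover the reduced cost $\sum_{(i,j)\in p}c_{ij}-\sum_{i\in p}\mu^i-g_p\gamma$, then establish dominance by the inductive propagation of $S_r\le S_r'$ through the non-decreasing REFs (and the order-preserving $\max\{l_j,\cdot\}$ truncation), combined with the three sign facts $\bar c(L)\le\bar c(L')$, $g_{ij}$ non-increasing, and $\gamma\ge 0$. No gaps; the induction you flag as the remaining bookkeeping is exactly the step the paper also carries out.
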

\begin{proof}
See the online supplement.
\end{proof}

That is, valid inequalities with a resource-robust application are easily implemented by only modifying the reduced cost extension function, while the standard dominance rules remain valid.
Also, no additional resources are introduced.
This is similar to robust applications, with the main difference that the cut contributions $g_{i(L),j}(R(L)) \gamma$ depend on the current resource vector $R(L)$.
The time to extend a label now grows linearly in the number of cuts, while the time to compare two labels remains constant.
While resource-robust applications incur some overhead compared to robust applications, this overhead is minor compared to the alternative of introducing additional resources, which may exponentially increase the number of non-dominated labels.

Similarly to robust applications, if the model and resources are clearly specified, we will call a valid inequality with a resource-robust application a \emph{resource-robust valid inequality}.
As previously mentioned, the route load knapsack cuts introduced by \cite{LiguoriEtAl2023-NonrobustStrongKnapsack} also fall into this category.
This follows directly from the original proof, in which the authors construct functions on the route load resource that satisfy Definition~\ref{def:rr}.

\section{New Resource-Robust Valid Inequalities}\label{sec:applications}
This paper applies the idea of resource-robust applications to the $ng$-route relaxation resources \citep{BaldacciEtAl2011-NewRouteRelaxation}.
The reason is twofold.
First, the $ng$-route relaxation is commonly used in state-of-the-art algorithms and the corresponding resources are thus readily available \citep{CostaEtAl2019-ExactBranchPrice,PessoaEtAl2020-GenericExactSolver}.
Second, the $ng$-route relaxation resources provide a wealth of information about the previously visited vertices that can be leveraged by the inequalities.

\citet{BaldacciEtAl2011-NewRouteRelaxation} introduced the $ng$-route relaxation to eliminate cycles within \emph{neighborhoods}.
For every vertex $i \in V'$, define a neighborhood $N_i \subseteq V'$ such that $i \in N_i$.

Next, $\lvert V' \rvert$ binary resources are introduced to indicate which vertices have been visited by partial route $L$ in the past.
It will be convenient to reason about these resources as a set $\Pi(L) \subseteq V'$ of vertices for which partial route $L$ has resource value one.

We will refer to $\Pi(L)$ as the \emph{$ng$-memory}.
The $ng$-memory is initialized as an empty set, and extensions from $L$ to $L'$ over $(i, j) \in A$ are updated according to
\begin{equation}
	\label{eq:ngmemoryupdate}
	\Pi(L') = (\Pi(L) \cap N_j)\cup\{j\}.
\end{equation}
Extensions to $j$ are only allowed if $j \notin \Pi(L)$.
It can be shown that the corresponding REF is non-decreasing and that the standard dominance rule~\eqref{eq:dom3} applies \citep{BaldacciEtAl2011-NewRouteRelaxation}.
This translates to $\Pi(L) \subseteq \Pi(L')$ in set notation.

It will be instructive to interpret the $ng$-memory $\Pi(L)$ as the vertices that the label $L$ \emph{remembers} visiting.
REF~\eqref{eq:ngmemoryupdate} then reads as follows: when label $L$ is extended to vertex $j \in V'$, the label forgets visiting the vertices outside of the neighborhood of $j$.
Next, the label commits the new vertex $j$ to memory.
The $ng$-route relaxation only allows extensions to vertices that the label does not remember ($j \notin \Pi(L)$), which eliminates cycles within neighborhoods.

It follows from the definition that increasing the size of the neighborhoods improves the $ng$-memory.
The extreme case $N_i = V'$ $\forall i \in V'$ in which all vertices are in the same neighborhood gives labels a \emph{perfect memory} of all previous visits.
This forces routes to be elementary, and we refer to the resources as \emph{elementarity resources} in this case \citep{ContardoEtAl2015-ReachingElementaryLower}.

\paragraph{Capacity Cuts}
Consider the case in which each customer $i \in V'$ has a demand $q_i \ge 0$ and each vehicle has a maximum capacity of $Q$.
\emph{Capacity cuts} in this setting are derived from the following observation: each subset $S \subseteq V'$ has to be served by at least $\left\lceil \frac{1}{Q}\sum_{i\in S}q_i\right\rceil$ routes to make sure there is enough capacity available.

The Strengthened Capacity Cuts (SCCs, \citep{BaldacciEtAl2004-ExactAlgorithmCapacitated}) are cuts that directly implement this observation:
\begin{equation}
	\label{eq:SCCs}
	\sum_{p \in P} \mathbb{I}(\lvert p \cap S\rvert \ge 1) x_p \geq \left\lceil \frac{1}{Q}\sum_{i\in S}q_i\right\rceil,
\end{equation}
with $\mathbb{I}(\cdot)$ the indicator function.
That is, every route that visits $S$ is counted once in the left-hand side summation.
The rounded Capacity Cuts (CCs, \citep{AugeratEtAl1998-SeparatingCapacityConstraints}) provide a weaker alternative:
\begin{equation}
	\label{eq:CCs}
	\sum_{p \in P} \sum_{(i, j) \in p} \mathbb{I}(i \notin S, j \in S) x_p \geq \left\lceil \frac{1}{Q}\sum_{i\in S}q_i\right\rceil.
\end{equation}
That is, routes are counted every time they enter the set $S$, even if they have been counted before.
This double counting results in a larger left-hand side and makes the cut weaker.
However, the dual costs can be projected onto the arcs using $g_{ij}=\mathbb{I}(i \notin S, j \in S)$, which allows a robust application for capacitated vehicle routing problems.
It is proven in the online supplement that the SCCs do not generally admit a robust application in this context, and as such we will refer to them as non-robust.

We can now define new cuts, called the \emph{$ng$-Capacity Cuts} ($ng$-CCs), that avoid double counting of route entries based on the information in the $ng$-memory.

\begin{mydef}[$ng$-Capacity Cuts]
	For a given path $p \in P$ with $K$ arcs, for $k \in \{1, \hdots, K+1\}$, let the $k$th label be denoted by $L_k$ and $i_k=i(L_k)$ such that $i_1=0$ and $i_{K+1}=n+1$.
	The $ng$-Capacity Cuts are defined as follows:
	\begin{equation}
		\label{eq:ngCCs}
		\sum_{p \in P} \sum_{k=1}^K \mathbb{I}(i_k \notin S, i_{k+1} \in S) \mathbb{I}(\Pi(L_k) \cap S = \emptyset) x_p \geq \left\lceil \frac{1}{Q}\sum_{i\in S}q_i\right\rceil.
	\end{equation}
That is, routes that enter the set $S$ are only counted if the label does not remember visiting $S$ in the past, i.e., $\Pi(L_k) \cap S = \emptyset$.
\end{mydef}

Figure \ref{fig:ngCCexample} shows an example for a route $p=(0,1,2,3,4,5,\allowbreak n+1)$ and a subset $S=\{1,3,5\}$.
Assume that the neighborhoods are given by $N_1=N_3=N_4=N_5=\{1,3,4,5\}$ and $N_2=\{2,3\}$.
As $p$ travels into $S$ at least once, the coefficient in SCC~\eqref{eq:SCCs} is 1.
Route $p$ travels into $S$ three times, so the coefficient in CC~\eqref{eq:CCs} is 3.
According to $ng$-memory, the route enters $S$ \emph{two times}: first when traveling from 0 to 1 (as the $ng$-memory for this extension is empty), and second when traveling from 2 to 3 (the visit to $1$ is not remembered because $1\notin N_2$).
Traveling from 4 to 5, the $ng$-memory equals $\{3,4\}$, so this entry is not counted.
It follows that the $ng$-CC \eqref{eq:ngCCs} has coefficient 2.

\begin{figure}[t]
	\centering
	\includegraphics[width=6cm]{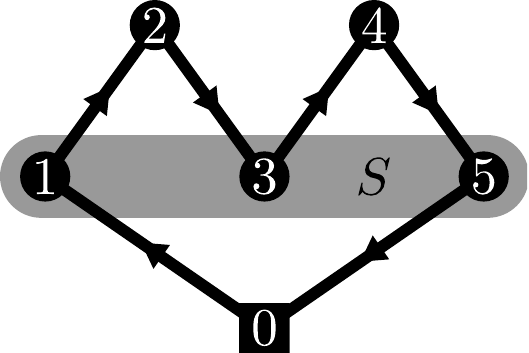}
	\caption{Example instance for the $ng$-CCs.}
	\label{fig:ngCCexample}
\end{figure}

In general, the coefficients of an SCC, $ng$-CC, and CC for the same route satisfy the following inequalities:
\begin{align}
\hspace{-0.5cm}\mathbb{I}(|p\cap S|\geq 1)&\leq\sum_{k=1}^K\mathbb{I}(i_k\notin S,i_{k+1}\in S)\mathbb{I}(\Pi(L_k)\cap S=\emptyset)\notag\\&\leq \sum_{k=1}^K\mathbb{I}(i_k\notin S,i_{k+1}\in S)\text{.}\label{eq:ngCC:correct}
\end{align}

The first inequality follows from the fact that if $p$ visits $S$, then the first entry $(i_k, i_{k+1})$ into $S$ satisfies both $i_k\notin S$, $i_{k+1}\in S$ (by definition) and $\Pi(L_k)\cap S=\emptyset$ (because this is the first entry into $S$).
This shows that the $ng$-CCs are not stronger than the SCCs, and are therefore valid cuts.
The second inequality in \eqref{eq:ngCC:correct} is trivial, and shows that the $ng$-CCs dominate the CCs.
To prove that the $ng$-CCs admit a resource-robust application when the $ng$-route relaxation is used, it is sufficient to show that
\begin{equation}
	\label{eq:ngCCg}
	g_{ij}(R) = \mathbb{I}(i \notin S, j \in S)\mathbb{I}(\Pi(R) \cap S = \emptyset)
\end{equation}
is non-increasing, where $\Pi(R)\subseteq V'$ is the $ng$-memory associated with resource vector $R\in\mathbb{R}^m$.
This follows immediately from the fact that $R\leq R'$ implies $\Pi(R)\subseteq\Pi(R')$, which implies $\mathbb{I}(\Pi(R) \cap S = \emptyset) \geq \mathbb{I}(\Pi(R') \cap S = \emptyset)$.

\paragraph{$k$-Path Cuts}
The same derivation can be used to obtain resource-robust $k$-path cuts \citep{KohlEtAl1999-2PathCuts}.
Let  $S \subseteq V'$ be a subset of customers, and let $k(S) \ge 1$ be a lower bound on the number of routes that are needed to serve $S$ in any feasible solution.
The $k$-path cuts are given by
\begin{equation*}
	\sum_{p \in P} \sum_{(i, j) \in p} \mathbb{I}(i \notin S, j \in S) x_p \geq k(S).
\end{equation*}

Note that the left-hand side is identical to that of the CCs~\eqref{eq:CCs}.
Hence, we may define $ng$-robust $k$-path cuts with $g_{ij}(R)$ defined as in \eqref{eq:ngCCg}.
As special cases we obtain $ng$-robust subtour elimination constraints \citep{DantzigEtAl1954-SolutionLargeScale} and $ng$-robust 2-path cuts \citep{KohlEtAl1999-2PathCuts}.
The same idea can also be applied to the generalized $k$-path cuts presented in \citep{DesaulniersEtAl2008-TabuSearchPartial}.

\paragraph{Strong Degree Constraints}

For a given customer $v \in V'$, the Strong Degree Constraint (SDC, \citep{ContardoEtAl2014-ExactAlgorithmBased}) is given by
\begin{equation*}
	\sum_{p\in P} \mathbb{I}(v \in p) x_p \geq 1,
\end{equation*}
where the coefficient $\mathbb{I}(v \in p)$ indicates that customer $v$ is visited by path $p$.
\citet{ContardoEtAl2014-ExactAlgorithmBased} introduce these cuts as an alternative to enforcing elementarity in the pricing problem.
That is, not including the elementarity resources and using these cuts instead gives the same main problem lower bound and better algorithmic performance.

We construct a variant that has a resource-robust application when the $ng$-route relaxation is used by observing that $\mathbb{I}(v \in p)$ can be interpreted as only counting a visit to $v \in V'$ if $v$ has not been visited before.
This condition is relaxed by only counting a visit to $v$ if the label does not \emph{remember} customer $v$ according to its $ng$-memory.
This results in an $ng$-SDC that is defined by the arc contributions
\begin{equation*}
	g_{ij}(R) = \mathbb{I}(j = v) \mathbb{I}(v \notin \Pi(R)).
\end{equation*}

Interestingly, while SDCs can be used as an alternative to enforcing elementarity, we show in the online supplement that $ng$-SDCs can be used as an alternative to enforcing $ng$-routes.
Given the computational success of the SDCs, using $ng$-SDCs in this way is an interesting topic for future research.

\section{Computational Experiments}
\label{sec:compexp}

To demonstrate the potential of resource-robust valid inequalities, we solve the Capacitated Vehicle Routing Problem (CVRP) with 1) only the robust CCs, 2) with the robust CCs and the resource-robust $ng$-CCs, or 3) with robust CCs and the non-robust SCCs.
The CVRP corresponds to the basic model~\eqref{eq:spp:obj}-\eqref{eq:spp:constr}, \eqref{eq:spp:binary} with the additional constraint that each route $p\in P$ satisfies a capacity constraint \citep{TothVigo2014-VehicleRoutingProblems}.

\subsection{Algorithm}
A straightforward Branch-Price-and-Cut (BPC) algorithm was implemented in C++ according to the setting described in Section~\ref{sec:setting}.
This section summarizes the main components, and for details we refer to the survey by \citet{CostaEtAl2019-ExactBranchPrice}.

\paragraph{Labeling}
A standard labeling algorithm is implemented with load $q(L)$ as an additional resource that has resource bounds $[0,Q]$ and REF $f_{ij}^{\text{load}}(q)=q+q_j$.
Following \citep{Pecin2014-ExactAlgorithmsCapacitated}, dominance checks are only performed between labels with the same current load.
The $ng$-route relaxation is implemented with neighborhoods of size 10 by selecting the nearest customers.
The dual contributions of CCs are projected onto the arcs, while dual contributions of $ng$-CCs are calculated according to~\eqref{eq:ngCCg}. For the SCCs, a new binary resource is introduced for every cut with a non-zero dual in the pricing problem. This resource keeps track of whether the corresponding set $S \subseteq V'$ has been visited or not. With these additional resources in place, standard label dominance rules \eqref{eq:dom1}-\eqref{eq:dom3} still apply.

\paragraph{Heuristic Pricing}
Heuristic pricing is applied through aggressive dominance and arc elimination \citep{CostaEtAl2019-ExactBranchPrice,DesaulniersEtAl2008-TabuSearchPartial}.
This allows for discovering promising routes quickly.
Eventually, the exact pricing problem is solved to guarantee optimality.

\paragraph{Cut Separation}
We leverage the CVRPSEP package by \citet{Lysgaard2003-CvrpsepPackageSeparation} to heuristically find violated valid inequalities for fractional solutions $x_p \ge 0$ $\forall p \in P$ encountered during the BPC process.
CVRPSEP was written to separate CCs, and may therefore fail to find violations of the stronger $ng$-CCs.
To remedy this situation, we modify CVRPSEP and the arc flows to find more violated $ng$-CCs.
We give the details of our separation algorithm in the online supplement.
CVRPSEP is a heuristic, so for a fair comparison the same procedure is used to separate CCs, $ng$-CCs and SCCs.

When only CCs are used, all separated cuts are added as CCs.
When $ng$-CCs are used, for each violated inequality, it is determined whether the cut is added as a CC or as an $ng$-CC.
If both have similar strength, the cut is added as a CC to avoid the overhead of evaluating~\eqref{eq:ngCCg}.
In particular, for CC and $ng$-CC violations $v^{CC}$ and $v^{ng}$, respectively, the cut is added as an $ng$-CC if $v^{ng} \ge \max\{0.2,\: v^{CC} + 0.1\}$, as a CC if $v^{CC} \ge 0.1$, and ignored otherwise.
The same procedure is used for SCCs instead of $ng$-CCs.

\paragraph{Strong Branching}
We implement strong branching on arc flows, similar to \citet{Ropke2012-BranchingDecisionsBranch} and \citet{PecinEtAl2017-ImprovedBranchCut}.
For a given arc $(i,j)\in A$, the score of a potential branch is given by $\Delta= 0.75\min\{\Delta^+,\Delta^-\}+0.25\max\{\Delta^+,\Delta^-\}$, where $\Delta^+$ and $\Delta^-$ are the objective values after forcing the flow on arc $(i,j)$ to one or zero, respectively. The arc to branch on is chosen as follows:
\begin{enumerate}
\setlength{\itemsep}{0pt}
\setlength{\parskip}{0pt}
	\item Select the 30 most fractional arcs.
	\item For these arcs, calculate the $\Delta$-score without generating any additional routes. Select the five arcs with the highest score.
	\item For these arcs, calculate the $\Delta$-score while generating additional routes with heuristic pricing only. Select the arc with the highest score to branch on.
\end{enumerate}

\paragraph{Other Details}
All experiments are run on an Intel Xeon  W-2123 $3.6$GHz processor and 16GB of RAM.
The algorithm is run on a single thread, and a time limit of one hour per instance is imposed.
All linear programs are solved with CPLEX $22.1.0$.

\subsection{Test Instances}
The benefit of using $ng$-CCs over using only robust or non-robust cuts is demonstrated on the \texttt{A}, \texttt{B} and \texttt{P} CVRP benchmark instances of \citet{Augerat1995-ApprochePolyedraleDu}.
Preliminary experiments suggest that these particular cuts work well when customer demand is relatively high compared to the vehicle capacity.
To fully explore this observation, we perform experiments on instances for which each customer demand is scaled by a factor $\alpha \in \{1, 1.25, 1.5, 1.75, 2\}$ (or set to the vehicle capacity if exceeded).
This results in 15 different instance classes that we distinguish by adding the scale factor to the name of the original class, e.g., \texttt{P175} refers to benchmark \texttt{P} with its demand scaled by factor $\alpha=1.75$.
The instance files can be found at {\small{\url{https://github.com/YNHoogendoorn/DataResults/tree/main/HoogendoornDalmeijer2024}}}.

\subsection{Results}

\begin{figure}[t]
	\centering
    \includegraphics[width=0.83\linewidth]{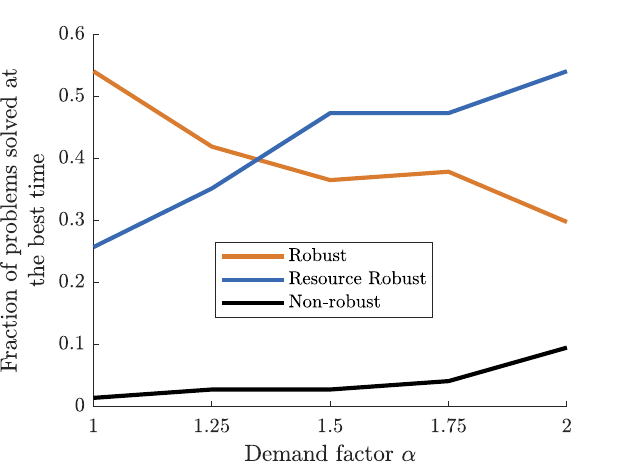}
	\caption{Fraction of all instances that were solved within the fastest time, for each demand factor $\alpha$.}
	\label{fig:fastest}
\end{figure}

\begin{table*}[p]
\resizebox{.91\textwidth}{!}{%
\begin{tabular}{ld{4.2}d{3.0}d{2.2}d{1.2}d{4.2}d{3.0}d{3.0}d{2.2}d{1.2}d{4.2}d{3.0}d{3.0}d{2.2}d{1.2}}
\toprule
&\multicolumn{4}{c}{Robust}&\multicolumn{5}{c}{Resource Robust}&\multicolumn{5}{c}{Non Robust}\\
\cmidrule(lr){2-5}\cmidrule(lr){6-10}\cmidrule(lr){11-15}
\multicolumn{1}{c}{Instance}&\multicolumn{1}{c}{Lower Bound}&\multicolumn{1}{c}{CCs}&\multicolumn{1}{c}{Seconds}&\multicolumn{1}{c}{Gap \%}&\multicolumn{1}{c}{Lower Bound}&\multicolumn{1}{c}{CCs}&\multicolumn{1}{c}{$ng$-CCs}&\multicolumn{1}{c}{Seconds}&\multicolumn{1}{c}{Gap \%}&\multicolumn{1}{c}{Lower Bound}&\multicolumn{1}{c}{CCs}&\multicolumn{1}{c}{SCCs}&\multicolumn{1}{c}{Seconds}&\multicolumn{1}{c}{Gap \%}\\
\midrule
\texttt{B2-n31-k5}&1100&61&0.18&0.00&1099.00&62&12&0.18&0.09&1100&63&14&0.29&0.00\\
\texttt{B2-n34-k5}&1262&47&0.20&0.00&1262&47&88&0.76&0.00&1262&47&99&0.49&0.00\\
\texttt{B2-n35-k5}&1608.50&62&0.69&1.21&1613.38&48&31&0.71&0.91&1613.84&69&34&7.25&0.88\\
\texttt{B2-n38-k6}&1335.58&115&0.43&0.18&1336.00&127&42&0.82&0.15&1336.00&146&42&0.78&0.15\\
\texttt{B2-n39-k5}&952&62&0.70&0.00&952&48&32&1.08&0.00&952&41&41&4.73&0.00\\
\texttt{B2-n41-k6}&1475.31&85&0.48&0.11&1476.57&88&34&0.60&0.03&1476.85&72&31&4.85&0.01\\
\texttt{B2-n43-k6}&1218.36&140&1.39&0.38&1219.83&155&53&1.72&0.26&1221.37&124&96&15.08&0.13\\
\texttt{B2-n44-k7}&1587.77&116&0.67&0.27&1588.71&109&85&1.19&0.21&1589.67&117&98&17.29&0.15\\
\texttt{B2-n45-k5}&1143.30&144&1.30&0.15&1144.11&99&56&1.93&0.08&1144.17&99&61&21.54&0.07\\
\texttt{B2-n45-k6}&1156.33&142&0.82&0.75&1156.68&112&68&1.23&0.72&1156.33&106&44&1.58&0.75\\
\texttt{B2-n50-k7}&1204&112&0.91&0.00&1204&79&41&1.72&0.00&1204&86&43&25.24&0.00\\
\texttt{B2-n50-k8}&2170.11&253&1.36&0.50&2173.06&261&75&1.81&0.37&2174.43&172&121&3.99&0.30\\
\texttt{B2-n51-k7}&1749.16&94&0.65&2.11&1753.02&84&61&0.84&1.88&1753.02&82&65&2.08&1.88\\
\texttt{B2-n52-k7}&1241.06&158&1.19&0.08&1241.62&180&119&2.85&0.03&1241.62&136&110&6.12&0.03\\
\texttt{B2-n56-k7}&1200.00&269&2.15&0.17&1200.50&207&93&2.68&0.12&1200.45&144&52&27.22&0.13\\
\texttt{B2-n57-k7}&2059.66&208&2.48&0.26&2060.30&165&45&4.03&0.23&2061.09&165&81&19.41&0.19\\
\texttt{B2-n57-k9}&2885.42&131&1.18&0.30&2887.62&96&13&1.30&0.22&2888.06&107&20&3.41&0.21\\
\texttt{B2-n63-k10}&2709.81&77&1.68&1.04&2709.77&70&7&1.84&1.04&2717.33&97&35&6.09&0.76\\
\texttt{B2-n64-k9}&1528.21&264&3.42&0.71&1529.21&262&60&3.76&0.64&1531.82&185&68&35.34&0.47\\
\texttt{B2-n66-k9}&2293.96&126&3.50&0.96&2295.93&119&52&3.92&0.87&2295.15&131&61&12.97&0.91\\
\texttt{B2-n67-k10}&1761.54&209&2.74&0.82&1764.68&169&103&3.03&0.64&1768.35&175&121&36.92&0.43\\
\texttt{B2-n68-k9}&2285.46&182&5.67&\dsh&2288.91&215&60&6.28&\dsh&2290.80&152&56&37.82&\dsh\\
\texttt{B2-n78-k10}&2099.79&188&3.98&0.68&2103.07&182&93&8.47&0.52&2102.92&207&109&260.05&0.53\\
\bottomrule
\end{tabular}}
\caption{Root Node Results for the \texttt{B2} instances.}
\label{tbl:results:root}
\end{table*}

\begin{table*}[p]
\resizebox{.91\textwidth}{!}{%
\begin{tabular}{ld{4.0}d{4.2}d{4.0}rrd{4.0}d{4.2}d{4.0}d{3.0}rrd{4.0}d{4.2}d{4.0}d{3.0}rr}
\toprule
&\multicolumn{5}{c}{Robust}&\multicolumn{6}{c}{Resource Robust}&\multicolumn{6}{c}{Non Robust}\\
\cmidrule(lr){2-6}\cmidrule(lr){7-12}\cmidrule(lr){13-18}
\multicolumn{1}{c}{Instance}&\multicolumn{1}{c}{Upper B.}&\multicolumn{1}{c}{Lower B.}&\multicolumn{1}{c}{CCs}&\multicolumn{1}{c}{Seconds}&\multicolumn{1}{c}{Nodes}&\multicolumn{1}{c}{Upper B.}&\multicolumn{1}{c}{Lower B.}&\multicolumn{1}{c}{CCs}&\multicolumn{1}{c}{$ng$-CCs}&\multicolumn{1}{c}{Seconds}&\multicolumn{1}{c}{Nodes}&\multicolumn{1}{c}{Upper B.}&\multicolumn{1}{c}{Lower B.}&\multicolumn{1}{c}{CCs}&\multicolumn{1}{c}{SCCs}&\multicolumn{1}{c}{Seconds}&\multicolumn{1}{c}{Nodes}\\
\midrule
\texttt{B2-n31-k5}&1100&=&61&0&1&1100&=&68&13&1&3&1100&=&63&14&0&1\\
\texttt{B2-n34-k5}&1262&=&47&0&1&1262&=&47&88&1&1&1262&=&47&99&0&1\\
\texttt{B2-n35-k5}&1628&=&370&800&767&1628&=&121&71&154&209&1631&1624.68&104&64&\texttt{timeout}&63\\
\texttt{B2-n38-k6}&1338&=&120&9&25&1338&=&127&42&7&13&1338&=&146&44&18&23\\
\texttt{B2-n39-k5}&952&=&62&1&1&952&=&48&32&1&1&952&=&41&41&5&1\\
\texttt{B2-n41-k6}&1477&=&146&19&40&1477&=&107&38&7&13&1477&=&73&33&39&15\\
\texttt{B2-n43-k6}&1223&=&172&37&31&1223&=&186&63&86&43&1223&=&152&100&213&13\\
\texttt{B2-n44-k7}&1592&=&195&48&71&1592&=&139&95&32&31&1592&=&190&125&670&23\\
\texttt{B2-n45-k5}&1145&=&180&27&31&1145&=&108&73&14&13&1147&1144.89&102&62&\texttt{timeout}&8\\
\texttt{B2-n45-k6}&1165&=&619&902&301&1165&=&354&147&469&203&1165&=&246&108&458&161\\
\texttt{B2-n50-k7}&1204&=&112&1&1&1204&=&79&41&2&1&1204&=&86&43&25&1\\
\texttt{B2-n50-k8}&2181&=&866&1437&466&2181&=&567&194&862&345&2181&=&332&201&915&165\\
\texttt{B2-n51-k7}&1786&=&380&1098&1091&1786&=&312&135&818&869&1789&1769.75&178&109&\texttt{timeout}&174\\
\texttt{B2-n52-k7}&1242&=&169&39&43&1242&=&184&120&37&23&1242&=&150&112&104&17\\
\texttt{B2-n56-k7}&1202&=&348&172&61&1202&=&253&101&110&29&1202&=&178&59&630&17\\
\texttt{B2-n57-k7}&2065&=&348&678&223&2065&=&263&99&401&84&\dsh&2064.22&235&115&\texttt{timeout}&55\\
\texttt{B2-n57-k9}&2894&=&478&351&219&2894&=&422&96&245&173&2894&=&341&82&195&65\\
\texttt{B2-n63-k10}&2738&2735.50&1465&\texttt{timeout}&1195&2738&=&920&289&1881&895&2743&2730.64&418&193&\texttt{timeout}&243\\
\texttt{B2-n64-k9}&1539&=&844&1652&505&1539&=&493&132&679&173&1542&1536.19&299&94&\texttt{timeout}&58\\
\texttt{B2-n66-k9}&\dsh&2311.84&801&\texttt{timeout}&701&2316&=&1000&244&3073&492&\dsh&2311.64&353&142&\texttt{timeout}&118\\
\texttt{B2-n67-k10}&1776&=&875&3443&929&1776&=&350&187&479&147&1776&=&351&205&2229&129\\
\texttt{B2-n68-k9}&\dsh&2295.66&492&\texttt{timeout}&722&\dsh&2297.88&412&133&\texttt{timeout}&517&\dsh&2293.42&195&71&\texttt{timeout}&24\\
\texttt{B2-n78-k10}&\dsh&2107.45&549&\texttt{timeout}&632&2114&2112.39&482&182&\texttt{timeout}&668&\dsh&2103.71&267&123&\texttt{timeout}&8\\
\bottomrule
\end{tabular}}
\caption{Branch-Price-and-Cut Results for the \texttt{B2} instances.}
\label{tbl:results:bpc}
\end{table*}

\begin{figure}[t]
	\centering
    \includegraphics[width=0.83\linewidth]{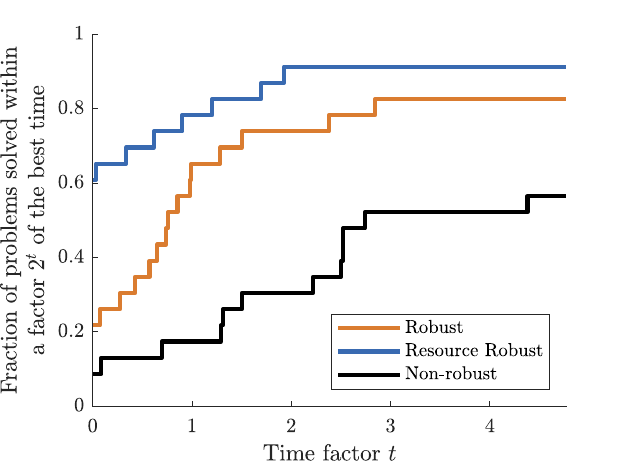}
	\caption{Performance Profiles BPC Solution Times for the \texttt{B2} instances.}
	\label{fig:performance_profile}
\end{figure}

Detailed results for all 15 instance classes are presented in the online supplement, and Figure~\ref{fig:fastest} provides a summary.
The figure shows the percentage of instances for which each configuration (only robust, resource-robust, or non-robust) provides the fastest solution time for different demand factors $\alpha$.
For regular demand $\alpha=1$, using an algorithm with resource-robust $ng$-CCs already results in the fastest solution time for more than 25\% of the instances.
The relative performance further improves as demand increases, and for $\alpha \ge 1.5$ using resource-robust cuts leads to the fastest solution time in most cases.

To examine the strong performance of the algorithm with resource-robust cuts, we zoom in on the \texttt{B2} instances for the remainder of this section.
This class represents a prototypical high-demand case in which the $ng$-CCs are very effective, and the conclusions extend to the other classes that are detailed in the online supplement.
The online supplement also shows that it is rare for $ng$-CCs to significantly \emph{decrease} performance, making it relatively safe to enable these cuts by default.

Table~\ref{tbl:results:root} compares the root-node performance for the \texttt{B2} instance class using only the robust CCs, including the resource-robust $ng$-CCs, and including the non-robust SCCs.
For each instance, the table presents the root node lower bound, the number of cuts added of each type, the total time in seconds to solve the root node, and the root node gap calculated against the best integer solution found by any of the algorithms.
As expected, in almost all cases the robust algorithm has the weakest lower bound and shortest solving time, while the non-robust algorithm has the strongest lower bound and longest solving time.
It can be seen that the resource-robust algorithm provides a very favorable trade-off with short solving times that are closer to that of the robust algorithm, and strong lower bounds that are closer to that of the non-robust algorithm.

In the full BPC framework, Table~\ref{tbl:results:bpc} shows that the resource-robust algorithm leads to significantly better performance compared to only using robust or non-robust cuts.

In addition to the previous statistics, the BPC table includes the number of nodes in the search tree, and matching bounds are indicated by an equals sign ($=$).

It can be seen that the stronger cuts result in substantially fewer nodes in the search tree compared to the robust algorithm, and substantially less overhead compared to the non-robust algorithm, resulting in a decrease of the overall solution time.
This is especially notable for the more difficult instances: the solution times of \texttt{B2-n64-k9}, and \texttt{B2-n67-k10} were each more than halved, and \texttt{B2-n63-k10} and \texttt{B2-n66-k9} were only solved by the resource-robust algorithm.

Figure~\ref{fig:performance_profile} summarizes the performance with \citet{DolanMore2002-BenchmarkingOptimizationSoftware} performance profiles (higher is better).
The figure shows that the resource-robust algorithm provides the best solution time for 60\% of the instances ($t=0$).
Even if the other algorithms are sped up by a factor two ($t=1$) they fall short of this performance.
The horizontal lines indicate that the robust and non-robust algorithm were able to solve 19/23 and 14/23 instances within the one hour time limit, respectively, while the resource-robust algorithm reached 21/23.

\section{Conclusions}
\label{sec:conclusion}

There are many interesting directions to explore in future research.
While this paper provides a proof of concept for the effectiveness of resource-robust applications, it would be valuable to test the new cuts within state-of-the-art implementations, combining the new $ng$-capacity cuts and $ng$-strong degree constraints in the same framework.
It is especially interesting to see how these cuts would interact with enhancements such as bidirectional labeling, unreachable customers, and route enumeration \citep{CostaEtAl2019-ExactBranchPrice}, and further research may be necessary to truly integrate with these other features.

There are a number of opportunities to build on the $ng$-robust valid inequalities introduced in this paper.
A heuristic was provided for separating violated $ng$-capacity cuts, but dedicated separation algorithms may be able to find better cuts.
Also, the $ng$-robust cuts may be extended to other concepts of memory, such as the arc-based $ng$-memory introduced by \cite{BulhoesEtAl2018-BranchPriceAlgorithm}.
Another idea is to use a dynamic $ng$-route relaxation \citep{RobertiMingozzi2014-DynamicNgPath}, which grows the neighborhoods over time, to strengthen the resource-robust cuts dynamically.

The setting in this paper is also applicable to problems on acyclic graphs, including generalized assignment, bin packing, and vector packing \citep{PessoaEtAl2020-GenericExactSolver}.
As there are no cycles to prevent, there was previously no reason to introduce $ng$-resources for these problems.
But with resource-robust inequalities, \emph{$ng$-resources can be introduced for the purpose of strengthening the cuts}.
This provides an interesting direction for future research.

Finally, there is the potential for new resource-robust variants or completely new resource-robust valid inequalities that are not necessarily based on $ng$-resources.
The route load knapsack cuts by \cite{LiguoriEtAl2023-NonrobustStrongKnapsack} represent a first step in this direction.

\bibliographystyle{cas-model2-names}
\bibliography{references}

\end{document}


\maketitle

\pagenumbering{roman}

\section{Proof of Theorem 1}
In this section, we give a proof of Theorem 1. For convenience, the theorem statement is repeated below.
\begin{theorem}
	\label{theorem:rr}
    If a valid inequality has a resource-robust application, it can be incorporated into the pricing problem by changing the reduced cost extension function~(2) of extending label $L$ over arc $(i(L),j)$ to
	\begin{equation}
		\bar{c}(L\oplus j) = \bar{c}(L) + c_{i(L),j} - \mu^{j}  - g_{i(L),j}(R(L)) \gamma, \tag{6}\label{eq:cbar_ext_RR}
	\end{equation}
    with $\gamma$ the dual of the inequality.
	If (3)-(5) holds for two labels $L$ and $L'$, then $L$ dominates $L'$.
\end{theorem}
\begin{proof}
Consider a feasible route $p=(0=i_1,i_2,\ldots,i_K,i_{K+1}=n+1)$ with labels $L_{1},\ldots,L_{K+1}$ along the route. Applying \eqref{eq:cbar_ext_RR} over each of the label extensions, we get $\bar{c}(L_{K+1})=\sum_{k=1}^{K}\big[c_{i_ki_{k+1}}-\mu^{i_k}-g_{i_ki_{k+1}}(R(L_k))\gamma\big] =\sum_{(i,j)\in p}c_{ij}-\sum_{i\in p}\mu^{i}- g_p\gamma$ by Definition~2, which is the reduced cost of route $p$. Note that we can ignore $\mu^{i_{K+1}}$ as it equals 0 per definition. Thus, extending labels using \eqref{eq:cbar_ext_RR} gives the correct reduced costs of a route.

Now assume we have two labels $L$ and $L'$ such that (3)-(5) holds. As $R(L)\leq R(L')$ and the REFs are non-decreasing, each feasible completion for $L'$ is feasible for $L$. To see this, take a single feasible extension to $j$. We get that $j=i(L\oplus j)=i(L'\oplus j)$ and $R(L\oplus j)=\max\{l_j,F_{(i(L),j)}(R(L))
\}\leq \max\{l_j,F_{(i(L'),j)}(R(L'))\}=R(L'\oplus j)$.
Furthermore, $g_{ij}$ is non-increasing by Definition~2, so we have that $\bar{c}(L\oplus j)=\bar{c}(L)+c_{i(L),j}-\mu^j-g_{i(L),j}(R(L))\gamma\leq\bar{c}(L')+c_{i(L'),j}-\mu^j-g_{i(L'),j}(R(L'))\gamma=\bar{c}(L'\oplus j)$. Thus, (3)-(5) also holds for $L\oplus j$ and $L'\oplus j$. Repeating this argument until the labels are completed, we can conclude that the reduced cost of any completion of $L$ is lower than that of $L'$, and thus that $L$ dominates $L'$.
\end{proof}

\section{Non-robustness of SCCs}
For a subset $S\subseteq V'$, the SCCs are defined as
\[
\sum_{p\in P}\mathbb{I}(|p\cap S|\geq 1)x_p\geq\left\lceil\frac{1}{Q}\sum_{i\in S}q_i\right\rceil\text{.}
\]
We show by means of a counterexample that the SCCs do not generally admit a robust application for the capacitated vehicle routing problem.
Consider an instance with $n=4$ customers, with demands $q_1=q_2=q_3=2$ and $q_4=1$, and vehicle capacity $Q=5$.
Next, we define six routes and their SCC coefficients for $S=\{1,2,3\}$:
\begin{align*}
    \text{Route 1: } & (0,4,5) & \mathbb{I}(|p\cap S|\geq 1) &= 0\\
    \text{Route 2: } & (0,1,4,2,5) & \mathbb{I}(|p\cap S|\geq 1) &= 1\\
    \text{Route 3: } & (0,1,4,5) & \mathbb{I}(|p\cap S|\geq 1) &= 1\\
    \text{Route 4: } & (0,4,2,5) & \mathbb{I}(|p\cap S|\geq 1) &= 1\\
    \text{Route 5: } & (0,1,3,5) & \mathbb{I}(|p\cap S|\geq 1) &= 1\\
    \text{Route 6: } & (0,2,3,5) & \mathbb{I}(|p\cap S|\geq 1) &= 1
\end{align*}%
These coefficients define the following SCC for $S=\{1,2,3\}$: $x_2 + x_3 + x_4 + x_5 + x_6 \ge 2$.
Next, we compare the two solutions $x^1=(x^1_1,x^1_2,x^1_3,x^1_4,x^1_5,x^1_6)=\left(\frac{1}{2},\frac{1}{2},0,0,\frac{1}{2},\frac{1}{2}\right)$ and $x^2=(x^2_1,x^2_2,x^2_3,x^2_4,x^2_5,\allowbreak x^2_6)=\left(0,0,\frac{1}{2},\frac{1}{2},\frac{1}{2},\frac{1}{2}\right)$.
It is straightforward to verify that both $x^1$ and $x^2$ are feasible solutions to the linear relaxation of (1), and that $x^1$ violates the SCC above while $x^2$ adheres to it.
Note that $x^1$ and $x^2$ have the same arc flows, that is, $\sum_{p:(i,j)\in p}x^1_p=\sum_{p:(i,j)\in p}x^2_p$ for all $(i,j)\in A$.
The fact that the arc flows are the same, but only $x^1$ violates the SCC, shows that the dual contribution of the cut cannot be captured by only projecting onto the arcs.
It follows that the SCCs do not generally admit a robust application in this context.

\section{Using $ng$-SDCs to enforce $ng$-routes}
For a given vertex $v\in V'$, the $ng$-SDCs are defined as
\[
\sum_{p\in P} \sigma_p^v x_p \geq 1\text{,}
\]
where $\sigma_p^v$ is the number of times route $p$ visits vertex $v$ while not remembering $v$ in its $ng$-memory. In this section, we show that enforcing the $ng$-SDCs for all $v\in V'$ is equivalent to enforcing $ng$-routes. 

Let us consider some $v\in V'$ and some feasible (possibly relaxed) solution $(x_p)_{p\in P}$. In this case, we allow $P$ to contain routes that are not $ng$-routes. Subtracting the $ng$-SDC corresponding to $v$ from (1b) gives
\[
\sum_{p\in P}(a^v_p - \sigma^v_p)x_p \leq 0\text{.}
\]
As $a^v_p - \sigma^v_p\geq 0$, this inequality implies $x_p=0$ for all $p\in P$ with $a^v_p - \sigma^v_p > 0$. These are precisely the routes that visit $v$ at least once when $v$ is already in the $ng$-memory of that route. Thus, enforcing the $ng$-SDCs for all $v\in V'$ implies that, if $x_p>0$, any vertex $v$ is visited only when $v$ is not in $ng$-memory. In other words, only $ng$-routes are allowed to take positive values.

\section{Separation algorithm}
In this section, we detail our separation algorithm to separate $ng$-CCs.

For a fractional solution $x_p \ge 0$ $\forall p \in P$, we propose to scale down the flow of each route $p$ by a factor $\kappa_p$, defined as $\min\left\{\kappa,\left\lceil\frac{l(p)}{2}\right\rceil\right\}$ for some $\kappa \ge 1$, with $l(p)$ the number of locations visited by $p$.
As this results in a violation of (1b), we restore flow balance by introducing two dummy nodes $i'$ and $i''$ per node $i\in V'$ (see Figure~\ref{fig:restore_flow_balance}). Then, we run CVRPSEP on this scaled network.
CVRPSEP is also modified to not terminate early if a violated cut it found, as not all cuts in the scaled network necessarily correspond to violated $ng$-CCs in the original network.
In Proposition \ref{prop:scaleNetwork} we show that scaling by a factor $\kappa_p$ allows all $ng$-CCs to be found that enter the set $S \subseteq V'$ at most $\kappa$ times.

We implement a strategy where CVRPSEP is first called for $\kappa=1$, and is increased by one when no more violated cuts can be found.
The maximum value of $\kappa$ is set to 4, as preliminary experiments have shown that no additional $ng$-CCs are found when $\kappa\geq 5$.

\begin{figure}[t]
	\centering
	\includegraphics[width=0.95\linewidth]{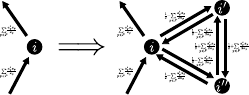}
	\caption{Restoring flow balance for scaled flows.}
	\label{fig:restore_flow_balance}
\end{figure}

\begin{proposition}\label{prop:scaleNetwork}
Given a solution to the continuous relaxation of the CVRP $(x_p)_{p\in P}$ and some $\kappa\in\mathbb{N}_{>0}$, if a violated $ng$-CC in the original network corresponds to a subset $S\subseteq V'$ that is entered at most $\kappa$ times by every non-zero route $p$ in this solution, then the CC corresponding to $S$ in the flow-reduced network $(\bar{x}_p)_{p\in P}=\left(\frac{x_p}{\kappa_p}\right)_{p\in P}$ is violated as well.
\end{proposition}
\begin{proof}
Let $(x_p)_{p\in P}$ be a solution to the continuous relaxation of the CVRP\text{.} The flow-reduced solution is defined as $(\bar{x}_p)_{p\in P}=\left(\frac{x_p}{\kappa_p}\right)_{p\in P}$, with $\kappa_p=\min\left\{\kappa,\left\lceil\frac{l(p)}{2}\right\rceil\right\}$ and $l(p)$ the number of non-depot locations route $p$ visits. The flow is reduced by a factor of $\kappa_p$ as each route can only visit $S$ at most $\kappa$ times per assumption and $\left\lceil\frac{l(p)}{2}\right\rceil$ times per definition. We will show that, if the $ng$-CC with subset $S$ is violated by the solution $(x_p)_{p\in P}$ such that all $p\in P$ with $x_p>0$ enter $S$ at most $\kappa$ times, then the CC with subset $S$ is violated by $(\bar{x}_p)_{p\in P}$.

Let $\alpha_p$, $\beta_p$ and $\gamma_p$ be the coefficient of $x_p$ in the CC, $ng$-CC and SCC of subset $S$ respectively. That is, $\alpha_p=\sum_{(i,j)\in p}\mathbb{I}(i\notin S,j\in S)$, $\beta_p=\sum_{k=1}^K\mathbb{I}(i_k\notin S,i_{k+1}\in S)\mathbb{I}(\Pi(L_k)\cap S=\emptyset)$ and $\gamma_p=\mathbb{I}(|p\cap S|\geq 1)$. We have that $\alpha_p\leq\kappa_p$ for all $p$ with $x_p>0$. As $\beta_p\geq\gamma_p=\min\{\alpha_p,1\}$, we have $\kappa_p\beta_p\geq\min\{\kappa_p\alpha_p,\kappa_p\}\geq\alpha_p$. Since the $ng$-CC of $S$ is violated in the original network, the following sequence of inequalities hold:
\[
\sum_{p\in P}\alpha_p\bar{x}_p=\sum_{p:x_p>0}\frac{\alpha_p}{\kappa_p}x_p\leq \sum_{p:x_p>0}\beta_p x_p < \left\lceil\frac{1}{Q}\sum_{i\in S}q_i\right\rceil\text{.}
\]
Thus, the CC of $S$ is violated in the reduced network.
\end{proof}

\newpage
\section{Additional results}
\subsection{Performance profiles}
The performance profiles listed below are the Dolan and Mor\'{e} [13] performance profiles for the 15 instance classes, equivalent to Figure 3 of the main text.
\begin{figure}[h]
	\centering
 \begin{minipage}{.5\textwidth}
	\centering
    \includegraphics[width=0.75\linewidth]{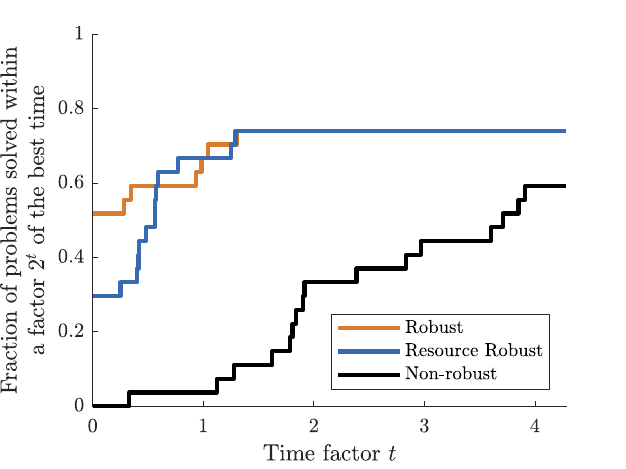}
	\caption{Performance Profiles \texttt{A} instances.}
	\label{fig:performance_profile_A}
 \end{minipage}%
  \begin{minipage}{.5\textwidth}
	\centering
    \includegraphics[width=0.75\linewidth]{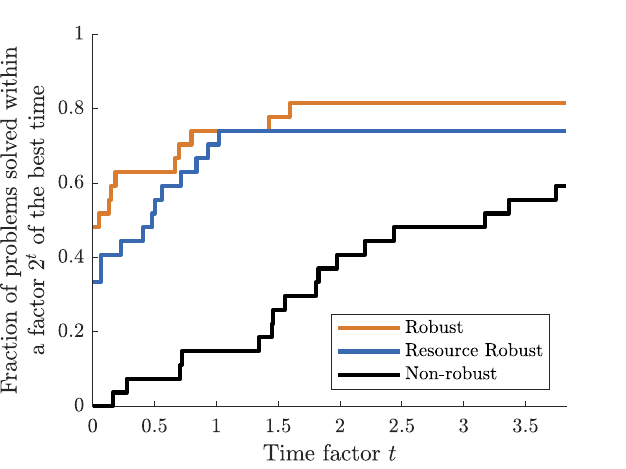}
	\caption{Performance Profiles \texttt{A125} instances.}
	\label{fig:performance_profile_A125}
 \end{minipage}
\end{figure}
\begin{figure}[h]
	\centering
 \begin{minipage}{.5\textwidth}
	\centering
    \includegraphics[width=0.75\linewidth]{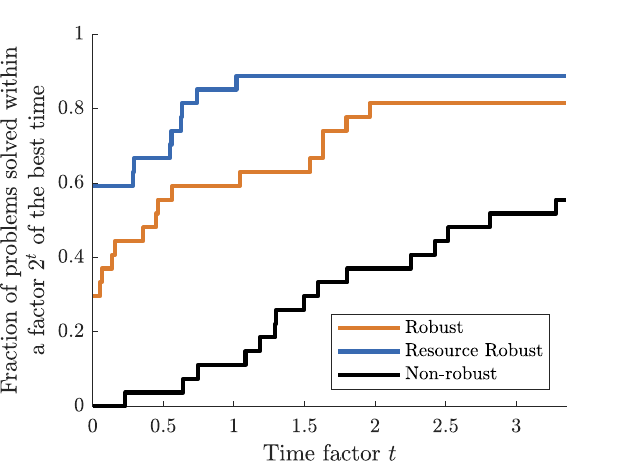}
	\caption{Performance Profiles \texttt{A15} instances.}
	\label{fig:performance_profile_A15}
 \end{minipage}%
  \begin{minipage}{.5\textwidth}
	\centering
    \includegraphics[width=0.75\linewidth]{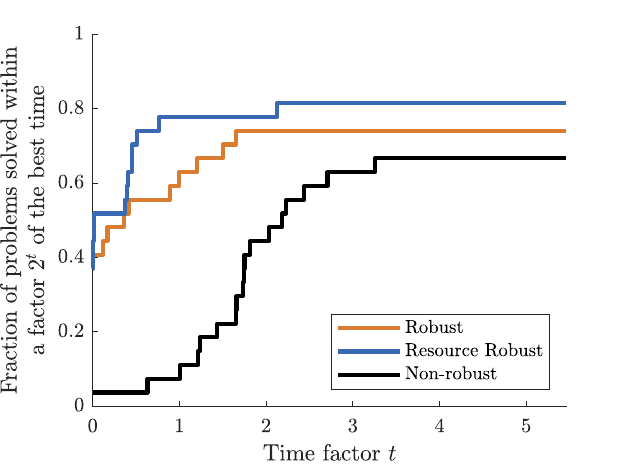}
	\caption{Performance Profiles \texttt{A175} instances.}
	\label{fig:performance_profile_A175}
 \end{minipage}
\end{figure}
\begin{figure}[h]
	\centering
    \includegraphics[width=0.375\linewidth]{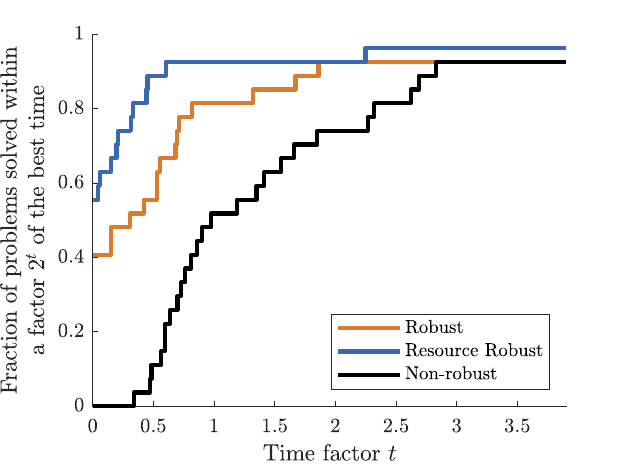}
	\caption{Performance Profiles \texttt{A2} instances.}
	\label{fig:performance_profile_A2}
\end{figure}
\newpage

\begin{figure}[h]
	\centering
 \begin{minipage}{.5\textwidth}
	\centering
    \includegraphics[width=0.8\linewidth]{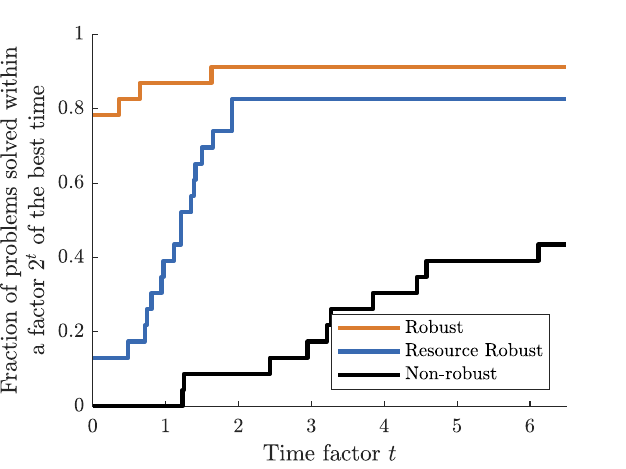}
	\caption{Performance Profiles \texttt{B} instances.}
	\label{fig:performance_profile_B}
 \end{minipage}%
  \begin{minipage}{.5\textwidth}
	\centering
    \includegraphics[width=0.8\linewidth]{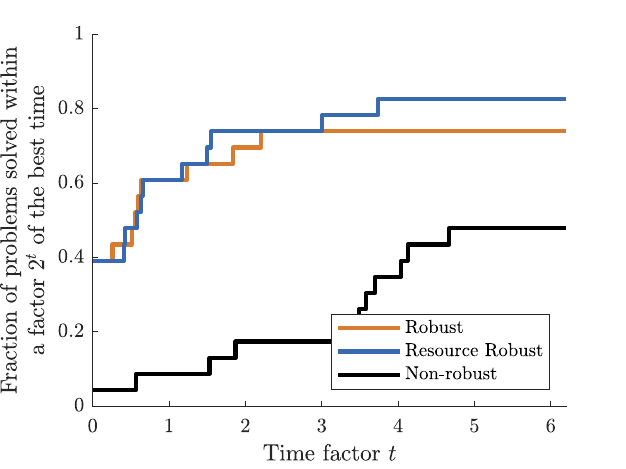}
	\caption{Performance Profiles \texttt{B125} instances.}
	\label{fig:performance_profile_B125}
 \end{minipage}
\end{figure}
\begin{figure}[h]
	\centering
 \begin{minipage}{.5\textwidth}
	\centering
    \includegraphics[width=0.8\linewidth]{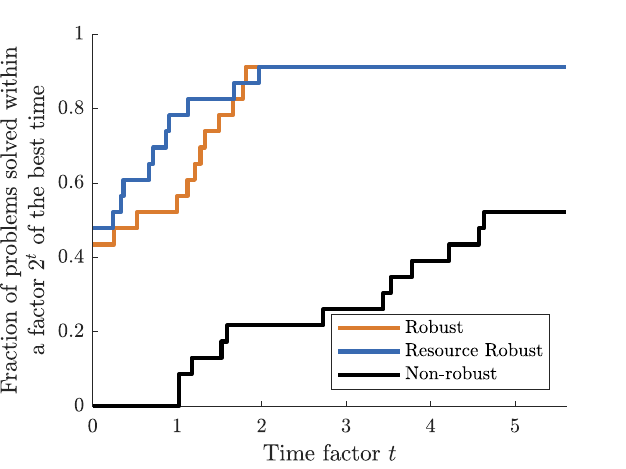}
	\caption{Performance Profiles \texttt{B15} instances.}
	\label{fig:performance_profile_B15}
 \end{minipage}%
  \begin{minipage}{.5\textwidth}
	\centering
    \includegraphics[width=0.8\linewidth]{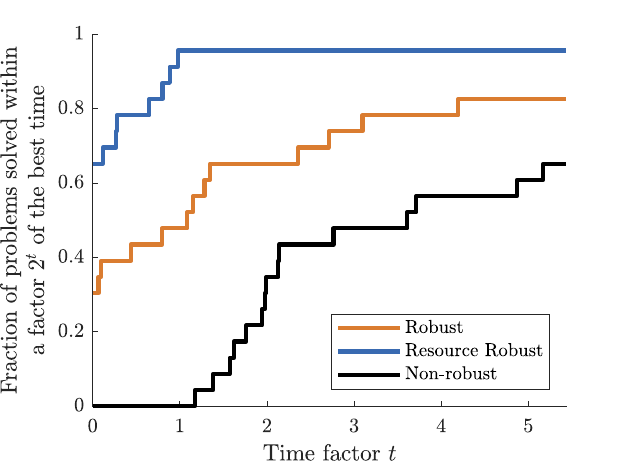}
	\caption{Performance Profiles \texttt{B175} instances.}
	\label{fig:performance_profile_B175}
 \end{minipage}
\end{figure}
\begin{figure}[h]
	\centering
    \includegraphics[width=0.4\linewidth]{figures/fig_perf_rev_B2.pdf}
	\caption{Performance Profiles \texttt{B2} instances.}
	\label{fig:performance_profile_B2}
\end{figure}
\newpage

\begin{figure}[h]
	\centering
 \begin{minipage}{.5\textwidth}
	\centering
    \includegraphics[width=0.8\linewidth]{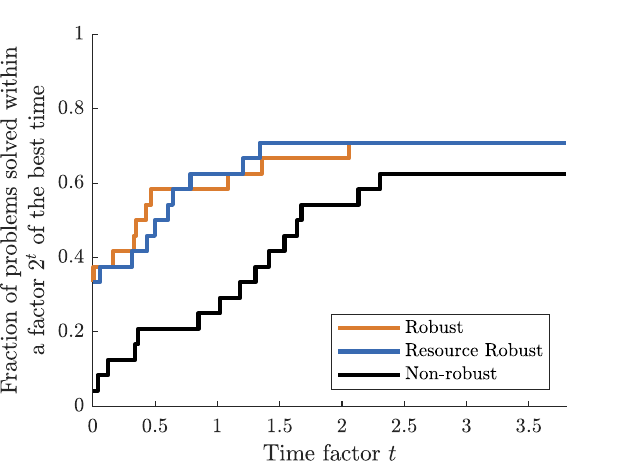}
	\caption{Performance Profiles \texttt{P} instances.}
	\label{fig:performance_profile_P}
 \end{minipage}%
  \begin{minipage}{.5\textwidth}
	\centering
    \includegraphics[width=0.8\linewidth]{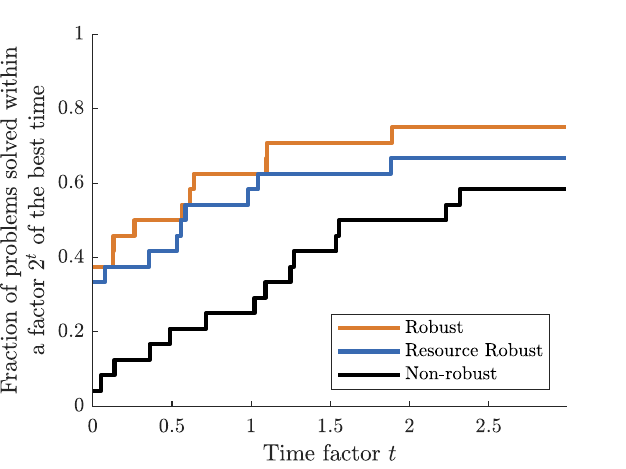}
	\caption{Performance Profiles \texttt{P125} instances.}
	\label{fig:performance_profile_P125}
 \end{minipage}
\end{figure}
\begin{figure}[h]
	\centering
 \begin{minipage}{.5\textwidth}
	\centering
    \includegraphics[width=0.8\linewidth]{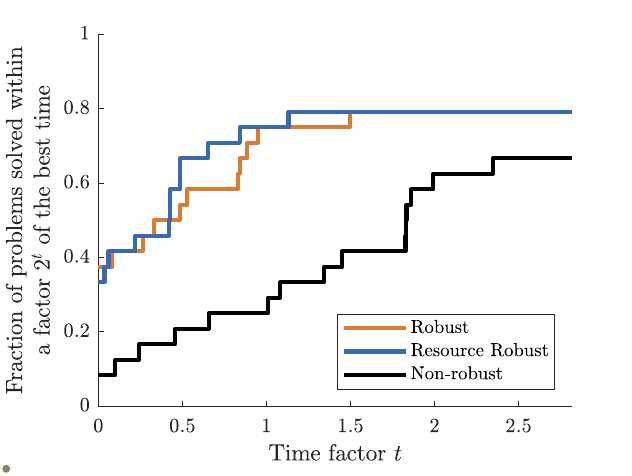}
	\caption{Performance Profiles \texttt{P15} instances.}
	\label{fig:performance_profile_P15}
 \end{minipage}%
  \begin{minipage}{.5\textwidth}
	\centering
    \includegraphics[width=0.8\linewidth]{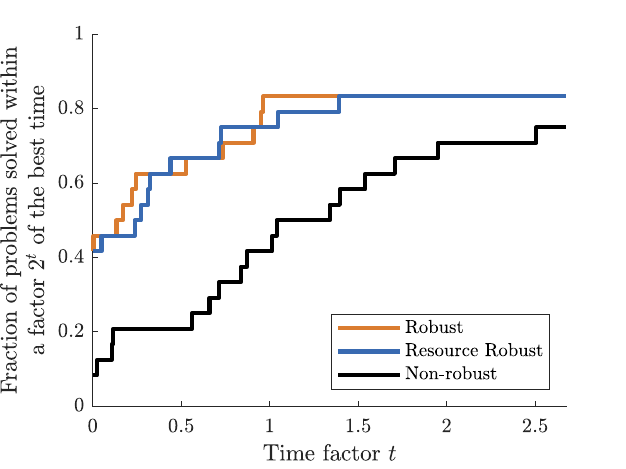}
	\caption{Performance Profiles \texttt{P175} instances.}
	\label{fig:performance_profile_P175}
 \end{minipage}
\end{figure}
\begin{figure}[h]
	\centering
    \includegraphics[width=0.4\linewidth]{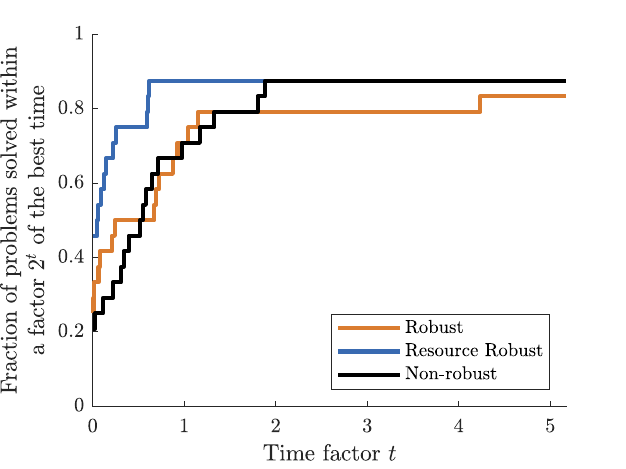}
	\caption{Performance Profiles \texttt{P2} instances.}
	\label{fig:performance_profile_P2}
\end{figure}
\newpage

\subsection{Root node and branch-price-and-cut results}
The tabels listed below show the root node results and the branch-price-and-cut results for all 15 instance classes, equivalent to Tables 1 and 2 of the main text.

\begin{table*}[h]
\resizebox{\textwidth}{!}{%
}
\caption{Branch-Price-and-Cut Results, \texttt{P2} instances.}
\label{tbl:results:bpc:P2}
\end{table*}
\newpage